\documentclass[a4paper]{article}
\usepackage[english]{babel}
\usepackage[utf8]{inputenc}
\usepackage{amsmath}
\usepackage{amsthm}
\usepackage{amssymb}
\usepackage{breqn}
\usepackage{graphicx}
\usepackage{fancyhdr}
\usepackage{subfigure}
\usepackage{subfig}
\usepackage{setspace}
\usepackage{caption}
\usepackage[margin=1in]{geometry}
\usepackage[T1]{fontenc}
\usepackage{authblk}

\newtheorem{theorem}{Theorem}
\newtheorem{lemma}[theorem]{Lemma}
\newtheorem{cor}[theorem]{Corollary}

\theoremstyle{plain}

\theoremstyle{definition}

\theoremstyle{remark}

\pagestyle{fancy}
\fancyhf{}
\fancyhead[R]{\thepage}
\title{Applications of Information Theory:\\
statistics and statistical mechanics}
\author[1]{Khizar Qureshi \thanks{Prof. Peter Shor provided generous amounts of feedback\\
Department of Mathematics\\
Massachusetts Institute of Techno\text{log}y\\
18.424: Se\text{min}ar in Information Theory}}

\date{\today}
\fancypagestyle{phdthesis}{%
\fancyhf{}
\fancyhead[R]{\thepage}
}

\begin{document}
\onehalfspacing
\maketitle
\begin{abstract}

The method of optimizing entropy is used to (i) conduct Asymptotic Hypothesis Testing and (ii) deter\text{min}e the energy distribution for which Entropy is maximized. This paper focuses on two related applications of information theory: Statistics and Statistical Mechanics.

\end{abstract}
\section*{Introduction}
Entropy is one measure of uncertainty within a system, and is often used to describe the disorder of sequences of quantized random variables. However, entropy can also be extended to methods within optimization, in which the disorder of a system of interest may be maximized or \text{min}imized. Such methods are prevalent within statistics, the physical sciences, and econometrics. 
\paragraph{}
The concerns of a statistician observing a sequence of outcomes include the validity of an explanatory hypothesis, its degree of significance, and any assumptions underlying the statistical tests. While linear hypothesis testing is often sufficient, larger sequences exhibit large deviations in behavior that should receive separate treatment. Traditional linear hypothesis testing trivially assigns a constant multiple of an explanatory parameter $\beta$ to an observation when for\text{min}g a hypothesis ($H_0 = k\beta = 0$). Optimal entropy, in which disorder is locally \text{min}imized or maximized, can be used to construct \textit{asymptotic}, non-linear hypothesis tests. Unlike linear hypothesis testing, error probability can be \text{min}imized.

Optimizing entropy extends to thermodynamic systems. The \textit{Third Law of Thermodynamics} states that the entropy of a closed system, i.e. \ one in which no mass or energy is added or removed, must be bounded from below by zero. Achieving a non-entropic system is nearly impossible, except within a perfect crystal lattice. A more probable state is one for which the entropy of a system is maximized, and observations follow a Boltzmann distribution.

In our study of optimal entropy, we will use classical Statistics and Statistical Mechanics as a lens. We will demonstrate the concept of \text{min}imal entropy through two statistical tests: the univariate optimality test  defined by \textit{Stein's Lemma}, and a multivariate optimality test, as defined by the \textit{Chernoff Bounds}. We will see that there exists a distribution, the Boltzmann distribution, that approaches maximum entropy as temperature goes to infinity. Finally, we will apply the concept of asymptotic hypothesis testing to Statistical Mechanics. In particular, we will test and observe the evolution of error probability with a growing sample size. We will also compare Q-function error probability with that of the Chernoff bound. The remainder of the paper is organized as follows. To better understand the atypicality of sequences, we will study the method of types. We will then learn how such sequences behave through the large deviation theory. The focus of the paper will then shift to hypothesis testing, in which we will develop tools for recognizing the asymptotic optimality of entropy. Illustrative examples of this will include Stein's Lemma and Chernoff bounds. For the interest of the physical sciences, we will rigorously derive the Boltzmann distribution, for which entropy is nearly maximized. Finally, we will converge the aforementioned topics through simulations of asymptotic statistical testing.

\section*{Hypothesis Testing}
Statisticians are often concerned with not just observed data, but the several possible underlying explanations. A few examples include: testing for the effectiveness of a drug, deter\text{min}ing whether or not a coin is biased, and the effect of gender on wage growth. We begin with a simple case in which we decide between two hypothesis, each of which is represented by an independent and identical distribution, or i.i.d. Let $X_1, X_2, \ldots, X_n$ be i.i.d. $\sim Q(x)$. For an observed outcome, we have two possible explanations:
\begin{itemize}
\item $H_1: Q = P_1$
\item $H_2: Q= P_2$
\end{itemize} 
We now define a general decision function, whose value reflects the acceptance and rejection of the above hypothesis. Namely, for general decision $g(x_1, x_2, \ldots, x_n)$, $g(x)=i$ indicates that $H_i$ is accepted. In the binary case, the set $A$ over which $g(x)=i$ is complemented by the set $A^c$, over which $g(x) \neq i$. 
\paragraph{}
Quite often, statisticians are concerned with accepting incorrect hypotheses and rejecting correct ones. Such occurrences, recognized as Type I/II errors, often occur when sequences exhibit atypicality and large deviating behavior (See appendix). Error probabilities are reflected through the decision function using weights $\alpha, \beta$:
\begin{equation}
\begin{split}
\alpha = P(g(x) = 2 | H_1 \text{ true}) = P^n_1(A^c)\\
\beta = P(g(x) = 1 | H_2 \text{ true}) = P^n_1(A)
\end{split}
\end{equation}
Notice that the general decision function takes on values contradicting those implied by the conditional hypothesis. The first implies that $H_2$ was accepted even though $H_1$ was true, and the second implies that $H_1$ was accepted even though $H_2$ was true. Type I (reject true) and Type II (accept false) errors similarly prove detrimental to experiments, and so we wish to minimize probabilities $\alpha$ and $\beta$.  Minimizing $\alpha$ increases $\beta$, and minimizing $\beta$ increases $\alpha$. We will now explore methodology to minimize the overall probability of error by optimizing entropy as a weighted sum of $\alpha$ and $\beta$. 

\subsection*{Stein's Lemma}
We first fix either $\alpha$ or $\beta$, and manipulate the other to \text{min}imize the probability of error.  
\begin{theorem}[Stein's Lemma]
Let $X_1, X_2, \ldots, X_n$ be i.i.d. $\sim Q$. Further, let $D(P_1 \| P_2)$ represent the Kullback-Leibler distance, or relative entropy between the probability densities. Consider the hypothesis test between two alternatives $Q=P_1$ and $ Q=P_2$ where $D(P_1 \| P_2)<\infty$. Let $A_n \subseteq \mathcal H^n$ be an acceptance region for hypothesis 1. Let the probabilities of error be
\begin{equation}
\begin{split}
\alpha_n = P^n_1(A^c_n)\\
\beta_n = P^n_2(A_n)
\end{split}
\end{equation} 
and for $0<\epsilon<\frac{1}{2}$, define
\begin{equation}
\beta^{\epsilon}_n = \text{min}_{A_n \subseteq X^n} \beta_n
\end{equation}
Then,
\begin{equation}
\text{lim}_{\epsilon \rightarrow 0} \  \text{lim}_{n \rightarrow \infty} \frac{1}{n} \text{log} \beta^{\epsilon}_n = -D \left( P_1 \| P_2 \right)
\end{equation}
\begin{proof}
See Appendix
\end{proof}
\end{theorem}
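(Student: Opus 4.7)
My plan is to prove Stein's Lemma by the standard two-sided argument based on the relative entropy typical set, splitting the proof into an achievability direction (exhibiting an explicit acceptance region whose Type II error decays at rate $D(P_1\|P_2)$) and a converse direction (showing no sequence of acceptance regions can do asymptotically better).

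First I would define, for small $\delta > 0$, the relative entropy typical set
\begin{equation}
T_n(\delta) = \left\{ x^n \in \mathcal{H}^n : \left| \frac{1}{n} \log \frac{P_1^n(x^n)}{P_2^n(x^n)} - D(P_1 \| P_2) \right| < \delta \right\}.
\end{equation}
Under $P_1$, the log-likelihood ratio $\frac{1}{n} \log \frac{P_1^n(X^n)}{P_2^n(X^n)}$ is an average of $n$ i.i.d.\ terms with mean $D(P_1\|P_2)$, so by the weak law of large numbers $P_1^n(T_n(\delta)) \to 1$. On $T_n(\delta)$ we have the pointwise bound $P_2^n(x^n) \le P_1^n(x^n)\, 2^{-n(D(P_1\|P_2) - \delta)}$, and summing over $T_n(\delta)$ yields $P_2^n(T_n(\delta)) \le 2^{-n(D(P_1\|P_2) - \delta)}$.

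For achievability I would take $A_n = T_n(\delta)$ as the acceptance region for $H_1$. Then $\alpha_n = P_1^n(A_n^c) \to 0 < \epsilon$ for all large $n$, so this $A_n$ is admissible in the infimum defining $\beta_n^\epsilon$, and the typicality bound gives $\frac{1}{n}\log \beta_n^\epsilon \le -D(P_1\|P_2) + \delta$. For the converse, I would consider any acceptance region $B_n$ with $P_1^n(B_n^c) \le \epsilon$ and intersect it with $T_n(\delta)$: by a union bound $P_1^n(B_n \cap T_n(\delta)) \ge 1 - \epsilon - o(1)$, and on $T_n(\delta)$ the reverse pointwise bound gives $P_2^n(x^n) \ge P_1^n(x^n)\, 2^{-n(D(P_1\|P_2) + \delta)}$. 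Summing over $B_n \cap T_n(\delta)$ yields
\begin{equation}
\beta_n \ge P_2^n(B_n \cap T_n(\delta)) \ge (1 - \epsilon - o(1))\, 2^{-n(D(P_1\|P_2) + \delta)},
\end{equation}
so $\liminf_n \frac{1}{n}\log \beta_n^\epsilon \ge -D(P_1\|P_2) - \delta$. Combining the two directions and letting $\delta \to 0$, then $\epsilon \to 0$, gives the claimed limit.

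The main obstacle, as usual, is the converse: the achievability side is essentially mechanical once one writes down $T_n(\delta)$, but the converse requires care in handling the fact that $B_n$ is arbitrary and may have irregular structure. The key trick is the intersection $B_n \cap T_n(\delta)$, which lets the $P_1$-mass of $B_n$ (controlled by the hypothesis $\alpha_n \le \epsilon$) be transferred to a lower bound on $P_2$-mass via the pointwise likelihood-ratio estimate available only on $T_n(\delta)$. Care is also needed with the order of limits: the inner $n \to \infty$ gives the bound $-D(P_1\|P_2) \pm \delta$ uniformly in $\epsilon \in (0,1/2)$, and only then does the outer $\epsilon \to 0$ become trivial, since $\delta$ was arbitrary and $\epsilon$ does not appear in the final expression.
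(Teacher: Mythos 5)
Your proposal is correct and follows essentially the same route as the paper's proof: the relative entropy typical set defined by the log-likelihood ratio, the law of large numbers to control $\alpha_n$, and the two-sided pointwise bounds $P_2^n(x^n) \lessgtr P_1^n(x^n)2^{-n(D(P_1\|P_2)\mp\delta)}$ to pin down the exponent. If anything, your converse is more carefully stated than the paper's, since you explicitly handle an arbitrary acceptance region $B_n$ by intersecting it with the typical set, whereas the paper only derives the matching lower bound for its constructed region.
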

Thus, no sequence of sets $B_n$ has an exponent better than $D \left( P_1 \| P_2 \right)$. But the sequence $A_n$ achieves the exponent $D \left( P_1 \| P_2 \right)$. Thus $A_n$ is asymptotically optimal, and the best error exponent is $D \left( P_1 \| P_2 \right)$.

\subsection*{Chernoff Bound}
Thus far, $\alpha$ and $\beta$ have been treated separately. The approach underlying Stein's Lemma was to set one error probability to be infinitesimally small, and measure the effect on the resulting probability. We saw that setting $\alpha \leq \epsilon$ achieved $\beta_n = 2^{-nD}$. However, the distribution of error amongst $\alpha$ and $\beta$ may be highly asymmetrical, in which case univariate optimization may not suffice. We now explore methodo\text{log}y for a bivariate optimization.  
\paragraph{}
An alternative approach is to \text{min}imize the weighted sum of $\alpha$ and $\beta$. The resulting error exponent is known as the \textit{Chernoff Information}. Consider a distribution of i.i.d. random variables: $X_1, X_2, \ldots, X_n$ representative of the decision function. We assign $P_1$ to Q with probability $\pi_1$ and $P_2$ to Q with probability $\pi_2$. Upholding the definition of $\alpha$ and $\beta$, the overall probability of error is
\begin{equation}
P^n_{\epsilon} = \pi_1 \alpha_n + \pi_2 \beta_n
\end{equation}
\begin{theorem}[Chernoff]
The best achievable exponent in the Bayesian probability of error is $D^{*}$, where 
\begin{equation}
D^* = \\text{lim}_{n \rightarrow \infty} \text{min}_{A_n \subseteq \mathcal{H}^n} - \frac{1}{n} \text{log} P^n_{\epsilon} = D \left( P_{\lambda^*} \| P_1 \right) = D\left( P_{\lambda^*} \| P_2 \right)
\end{equation}
where
\begin{equation}
P_{\lambda} = \frac{P^{\lambda}_1(x) P^{1-\lambda}_2(x)}{\sum_{a \in X} P^{\lambda}_1(a) P^{1-\lambda}_2(a)}
\end{equation}
and $\lambda^{*}$ the value of $\lambda$ such that
\begin{equation}
D \left(P_{\lambda^*} \| P_1 \right) = D \left(P_{\lambda^*} \| P_2 \right).
\end{equation}
\begin{proof}
See Appendix
\end{proof}
\end{theorem}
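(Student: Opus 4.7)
The plan is to reduce the Bayesian minimization to two separate large-deviation computations and then balance them via a single scalar parameter. First I would argue that the priors $\pi_1,\pi_2$ are asymptotically irrelevant: since
$$\min(\pi_1,\pi_2)\,\max(\alpha_n,\beta_n) \le \pi_1\alpha_n + \pi_2\beta_n \le 2\max(\alpha_n,\beta_n),$$
the exponent of $P^n_{\epsilon}$ equals the minimum of the exponents of $\alpha_n$ and $\beta_n$. By the Bayes / Neyman–Pearson principle, the error-minimizing decision rule is then a log-likelihood-ratio test $A_n = \{x^n : \tfrac{1}{n}\log \tfrac{P_1^n(x^n)}{P_2^n(x^n)} \ge T\}$ for some threshold $T$, and any other rule is inferior.

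Next I would invoke the method of types. The log-likelihood ratio depends on $x^n$ only through its empirical type $P_{x^n}$, and can be rewritten as $D(P_{x^n}\|P_2) - D(P_{x^n}\|P_1)$. Sanov's theorem then gives
$$\alpha_n \doteq 2^{-n\min_{P:\,D(P\|P_2)-D(P\|P_1)\le T} D(P\|P_1)}, \qquad \beta_n \doteq 2^{-n\min_{P:\,D(P\|P_2)-D(P\|P_1)\ge T} D(P\|P_2)}.$$
A Lagrangian calculation shows that the constrained minimizer in each case is precisely the tilted distribution $P_\lambda(x) \propto P_1^{\lambda}(x)P_2^{1-\lambda}(x)$ given in the statement, with $\lambda\in[0,1]$ acting as the Lagrange multiplier dual to the threshold $T$. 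Substituting $P_\lambda$ back in, the first minimum evaluates to $D(P_\lambda\|P_1)$ and the second to $D(P_\lambda\|P_2)$.

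With the two exponents now sharing a common parameter $\lambda$, the Bayesian error decays at rate $\min\bigl(D(P_\lambda\|P_1),\,D(P_\lambda\|P_2)\bigr)$. To maximize this minimum, I would note that as $\lambda$ ranges over $[0,1]$, $D(P_\lambda\|P_1)$ increases (from $0$ at $\lambda=1$ to $D(P_2\|P_1)$ at $\lambda=0$) while $D(P_\lambda\|P_2)$ decreases, so their minimum is maximized exactly where they cross. This crossing is the defining condition $D(P_{\lambda^*}\|P_1)=D(P_{\lambda^*}\|P_2)$ in the theorem, yielding the value $D^*$.

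The hard part will be the Lagrangian identification of $P_\lambda$: one must justify that the minimizer of $D(P\|P_1)$ over distributions satisfying a linear constraint of the form $\sum_x P(x)\log\tfrac{P_1(x)}{P_2(x)}=T$ is an exponential tilt of $P_1$, and then verify that this tilt coincides with the symmetric formula $P_1^{\lambda}P_2^{1-\lambda}/Z$. The other subtle point is converting the $\doteq$ asymptotics of Sanov's theorem into an actual limit; this is routine because the type classes contribute only $\mathrm{poly}(n)$ factors that vanish after $\tfrac{1}{n}\log$. A matching lower bound then follows automatically, since the likelihood-ratio rules are themselves sequences of regions achieving the exponent $D^*$.
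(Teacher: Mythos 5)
Your proposal follows essentially the same route as the paper's appendix proof: reduce to a likelihood-ratio test, use the method of types and Sanov's theorem to express the two error exponents as relative-entropy minimizations whose optimizer is the tilted distribution $P_\lambda$, and then maximize the worse of the two exponents at the crossing point $\lambda^*$ where $D(P_{\lambda^*}\|P_1)=D(P_{\lambda^*}\|P_2)$. The only difference is that you make explicit two steps the paper leaves implicit --- the asymptotic irrelevance of the priors $\pi_1,\pi_2$ and the Lagrangian identification of $P_\lambda$ as the constrained minimizer --- which fills in rather than alters the argument.
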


\section*{Physical Chemistry}
Claude Shannon first proposed that the uncertainty due to possible errors in a message could be encapsulated by
\begin{equation}
U(W) = \text{log} W
\end{equation}
where W is the number of possible ways (state space) of encoding random information. Intuitively, the uncertainty increases with increasing W, and is zero if W=1. The concept of entropy provides a deep-rooted link between information theory and statistical mechanics. The state with the least information available, or greatest entropy occurs when the set of all states are equiprobable. This is also the state with maximum uncertainty. An information theoretic perspective dictates that explicit knowledge of various probabilities associated with the system constitutes greater information. Similarly, the thermodynamics of a system of isolated particles indicate that entropy is directly correlated with expected energy level. Below is a molecular orbital diagram that illustrates the possible energy states, all of which depend on the position an electron occupies.
\begin{figure}[h]
\centering
\includegraphics[scale=0.7]{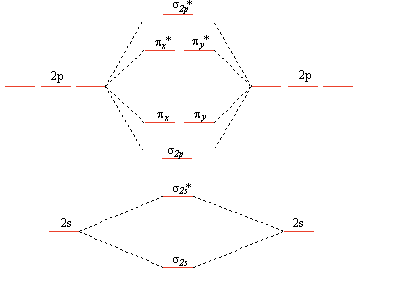}
\caption{The figure above is a molecular orbital diagram. When two atoms interact, and possibly share their valence (outermost) electrons, the electrons must occupy a particular orbital. Once occupying an orbital, the atoms are able to form a bond. Here, we are not concerned with the type of the bond, but rather, paired occupancy. The empty orbitals are \textit{indistinguishable}, and will be occupied with equal probability. }
\end{figure}
\paragraph{}
Entropy may also be observed in macroscopic states. The \textit{second law of thermodynamics} states that in equilibrium, changes in entropy are proportional to changes in system heat per unit temperature. We have $dW = \frac{dQ_{sys}}{T}$. We can better understand this through an illustration. Consider a system of gas particles that may be expanded or compressed. We can study the system under various entropic regimes. The diagram below illustrates how available work decreases (increases) for gaseous expansion (compression) under bivariate states of pressure and volume.
\begin{figure}[h]
\centering
\includegraphics[scale=0.5]{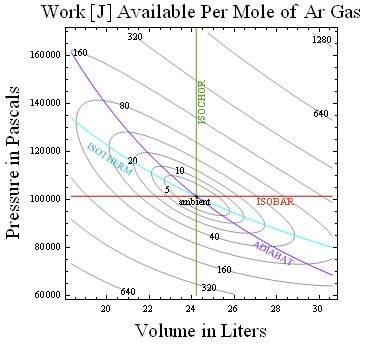}
\caption{The figure above is a pressure-volume diagram for a system of Argon gas particles. Expanding or compressing a gas requires energy, the extent of which depends on the state of the system. Notice that for an adiabatic system $\left(dQ_{sys} = 0 \right)$, compressing the gas requires the least relative energy. That is, when when the change in entropy in \text{min}imized, a system can be most naturally expanded/compressed. To reach \text{min}imum work available, we move down the gradient of steepest descent until entropy is globally \text{min}imized.}
\end{figure}
\paragraph{}
Consider a perfectly structured crystal lattice structure, in which the positions of each contributing molecule is fixed. If we observe such a system, depart, and return after $n$ periods, the position of each molecule within the crystal will have remained the same \textit{almost surely}. If the particles did not displace, then they also carried zero kinetic energy, which is representative of a zero temperature system. This near certainty of a thermodynamic system is an example of an optimization in which entropy is \text{min}imized. If instead, the system consisted of a fair coin toss, with no extra information, entropy would be maximized.   

\subsection*{The Boltzmann Distribution}
The distribution that maximizes the state space for a fixed energy level is the Boltzmann distribution. We will now derive such a distribution, and show that it uniquely maximizes entropy on each energy level. Consider a crystal containing $N$particles, each of which has available energy levels, $\epsilon_n$. The state space, W, is the number of ways the total energy $E = \sum_n N_n\epsilon_n$ can be distributed amongst the the particles in each energy level, across all energy levels, $N=\sum_n N_n$. The expected number of particles in each energy level, $N_n$, is the product of the probability that a particle is at an energy level, $P_n$, and the total number of particles, N. The only consideration for such a distribution is the number of particles in each energy levels, not necessarily the amount of energy allocated to each particle. Energy is conserved amongst states and across energy levels. While there exist several ways of assigning the number of particles in each energy level $\epsilon_n$, we wish to find the state with the distribution achievable in the most number of ways for fixed energy levels. Our first constraint is that the total state space W is the sum of individual states occupied, $W_i$ for all possible distributions
\begin{equation}
W = \sum W_i
\end{equation}
Amongst all possible distributions of particles, there exists one that can be achieved in more ways than any other. A distribution that approaches maximum entropy for fixed energy levels is the Boltzmann distribution.

\begin{equation}
\text{log} W \cong \text{log} W_B
\end{equation}
To find the most probable distribution that maximizes W, we first note that each particle in the crystal can be distinguished from the others because it occupies a defined position in space. Therefore, such a setting allows us to number the particles $1,2, \ldots, N$. We assume a large N, to maintain consistency with typical non-deficient states. S  A particular microstate of the crystal will place particle 1 in energy level $\epsilon_i$, particle 2 in energy level $\epsilon_j$, and so on. Initially, we seek the number of $W_i$ microstates in a distribution for which there are $N_1$ particles in $\epsilon_1$, $N_2$ particles in $\epsilon_2,$, and so on. We choose, at random, particles from the crystal, $N_i$, and assign them to energy levels, $\epsilon_i$. The number of ways this can be done is equal to the number of different orders in which the particles can be chosen from the crystal. The first particle can be chosen from a group of N. With $N-1$ particles remaining, the second can be chosen in $N-1$ ways. We see that the number of ways for selecting the first two particles is $N(N-1)$. Following this procedure, we then we see that the number of ways for selecting the $N$ particles is $N(N-1)(N-2)(N-3) \dots (3)(2)(1)$, or $N!$. 
\paragraph{}
We have over counted the ways of achieving a given distribution, and have assumed that all states are distinguishable. Consider the placement of the first two particles into energy level $\epsilon_1$. It makes no difference whether the first particle is placed into $\epsilon_1$ prior to or following particle 2. That is, the states are $\textit{indistinguishable}$. This relaxes the strictness on order, and so permutation are ignored. Thus, the state space, $W_i$, for a given distribution, is $N!$ divided by the product of all $N!$
\begin{equation}
W_i = \frac{N!}{\prod_{n} N_n!}
\end{equation} 
To find the distribution that maximizes $W_i$, we note Stirling's approximation for \text{log} N!
\begin{equation}
\text{log} \ N! \approx N \ \text{log} \ N - N
\end{equation}

Finding the maximum of $W_i$ is equivalent to finding the maximum of $\text{log} \ W_i$, so we combine equations (13) and (14), and re-arrange as follows
\begin{equation}
\text{log} \ W_i \approx \text{log} \ N! - \sum_n \text{log} N_n! = N \ \text{log} \ N - \sum_n N_n \ \text{log} N_n = \left(\sum_n N_n \right) \ \text{log} \sum_n N_n - \sum_n N_n \ \text{log} N_n
\end{equation}
However, the set of particles is conserved. Moreover, the net energy within the system is conserved. This provides the following two constraints
\begin{equation}
\begin{split}
\sum_j N_j = N\\
\sum_j \epsilon_j N_j = E
\end{split}
\end{equation}
We now use Lagrange's method of undeter\text{min}ed multipliers. When $W_i$ is maximized, its differential \text{log} must be zero
\begin{equation}
d \ \text{log} \ W_i = \sum_j \frac{\partial \text{log} W_i}{\partial N_j}_j dN_j =0
\end{equation}
We multiply the constraints on particle count and energy by constants $\alpha, \beta$, and then take the differential to obtain
\begin{equation}
\begin{split}
\alpha \sum_j dN_j = 0\\
\beta \sum_j \epsilon_j dN_j = 0
\end{split}
\end{equation}
Subtracting these two constraints from the \text{log}-entropy, we obtain
\begin{equation}
\sum_j \left(\frac{\partial \text{log} W_i}{\partial N_j} - \alpha -\beta \epsilon_j \right) dN_j = 0
\end{equation}
Through use of \text{log}-properties and algebraic manipulation (See Appendix), the expression above is reduced to
\begin{equation}
\text{log} N_j = \text{log} N - \alpha -\beta \epsilon_j
\end{equation}
which, after exponentiating both sides is
\begin{equation}
N_j = Ne^{-\alpha}e^{-\beta \epsilon_j}
\end{equation}
The significance of this result is that it shows the occupancy of en energy state $\epsilon_j$ is proportional to $e^{-\beta \epsilon_j}$. 
\subsubsection*{Key Result}
To account for energy states, thermodynamicists often make use of temperature, an intrinsic quantity. Temperature is equivalent to the average kinetic energy of a system of particles. Because this varies across systems, we normalize. For a temperature T, and the Boltzmann constant, $k_B$, 
\begin{equation}
\beta = \frac{1}{k_BT}.
\end{equation}
Inducting on the one particle case, in which, $e^{\alpha} = \sum_j e^{\frac{-\epsilon_j}{k_BT}}$, and combining the preceding three expressions, providing the desired result
\begin{equation}
\begin{split}
P_n = \frac{N_n}{N} = \frac{e^z}{\sum_N e^z}\\
\text{where } z=\frac{-\epsilon_j}{k_BT}
\end{split}
\end{equation}
We have now found the Boltzmann distribution, for which entropy is maximized. The Boltzmann probability above expresses the fraction of particles placed in each quantum state $n$ to maximize entropy $W_i$ of the distribution over each energy level, $\epsilon_j$.
\section*{Simulation: Asymptotic Hypothesis Testing}
Thus far, we have studied various methods of statistical testing, highlighting the importance of asymptotic tests such as the Chernoff Information Bound. We have also (briefly) explored Statistical Mechanics, in which we show that entropy is maximized for a Boltzmann distribution. We now demonstrate the importance of our learnings through a representative example. 
\paragraph{}
Robust methods of signal interpretation allow for communication, and involve the separation of \textit{signal} and \textit{noise}. A simple signal will follow a Gaussian distribution, for which entropy is maximized. A hypothesis consists of assigning observations as either signals, or as noise. Such hypotheses carry error probabilities, and should be studied with both linear testing, as well as asymptotic testing. 

\subsection*{Example: Binary Detection}
The classical binary detection problem involves the reception of finite-length signals realized as a random process $r[n], n=1,2, \ldots,$ [3]. The signal can be attributed to either Gaussian white noise $n[i]$ or a deterministic signal $s[i]$. Basic studies involve the interpretation of the signal-to-noise ratio, a measure of quality.
Consider a binary detection problem:
\begin{equation} 
r=s_i + n, ,\ i \in \{1,2 \}
\end{equation}
\begin{itemize}
\item Detections are composed of signals and noise
\item n: N-dimensional noise vector 
\item i.i.d. Gaussian random variables and $\sim \left(0,1 \right)$
\item $s_1 = \left(m,m,\ldots,m \right)$
\item $s_2 = \left(0,0,\ldots 0 \right)$
\end{itemize}
Evaluate the error probability for both $N=1$ and $N=4$ when $m \in \{1,2,3,4,5,6 \}$.\\

Traditionally, error probability is evaluated through the Q-function, which represents the probability that a normal random variable will obtain a value larger than $x$ standard deviations above the mean. It can also be thought of as the "tail" probability of the standard normal distribution, and is useful for linear hypothesis testing.

\begin{equation}
Q(x) = \frac{1}{\sqrt{2 \pi}} \int _x^{\infty} exp \left(\frac{-t^2}{2} \right) dt=\frac{1}{2} erf \left( \frac{x}{\sqrt{2}} \right)
\end{equation}
Given a signal-to-noise Ratio, the Q-function can be used to deter\text{min}e the error probability
\begin{equation}
P^{e} = Q \left(\frac{\rho}{\sigma} \right) \text{where } \ \frac{\rho^2}{\sigma^2} = \frac{Nm^2}{4}
\end{equation}

We also know that any error probability is bounded from above by the Chernoff Information bound. For the Q-function,
\begin{equation}
Q(x) \leq exp \left(-\frac{x^2}{2} \right) \text{erf}
\end{equation}
And so,
\begin{equation}
P^{e} = Q \left( \frac{\sqrt{N}m}{2} \right) \leq exp \left(-\frac{m^2N}{8} \right)
\end{equation}
The figure below illustrates the growth of error in both forms of testing.

\begin{figure}[htb!]
\centering
\includegraphics[scale=0.6]{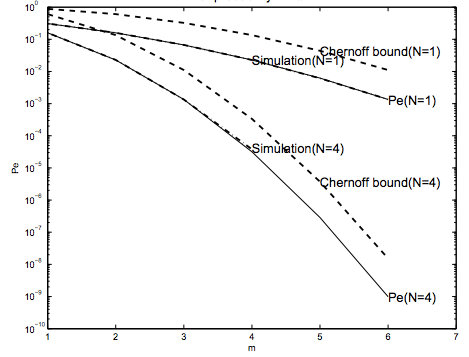}
\caption{The figure above shows the evolution of error probability with an increasing signal length (m). Error probability decreases as: (i) the number of bits in the signal increases, and (ii) the number of elements in the noise vector increases.}
\end{figure}
\section*{Concluding Remarks}
Optimizing entropy demonstrates the applicability of information theory beyond computing. Asymptotic testing captures error probability in atypical sequences, and a Boltzmann distribution of particles approaches maximum entropy, as temperature goes to infinity. \newpage
\section*{Appendix}
Motivation for asymptotic testing arises from atypicality and large deviations in sequences. We \textit{briefly} review this, and encourage the ambitious reader to study further.
\subsection*{The Method of Types}

The Asymptotic Equipartition Property formalizes that although there exist several possible outcomes of a stochastic process, there exists a set from which sequences are \textit{typical} , or most frequently observed. The centric approach underlying the AEP involves defining an almost sure convergence in probability between the expectation of a sequence to its entropy. Similarly, the Method of Types defines strong bounds on the number of sequences of a particular distribution, as well as the probability of each such sequence being observed. 

\section*{Large Deviation Theory}

Recall that \textit{type} of a sequence $x^n_i \in A^n$ is representative of its empirical distribution $\hat{P} = \hat{P}_{x^n_i}$ where:
\begin{equation}
\hat{P}(a) = \frac{|\{i: x_i = a\}|}{n}, a \in A.
\end{equation}
A distribution P on A is called an \textit{n-type} if it is the type of some $x^n_1 \in A^n$. The set of all $x^n_1 \in A^n$ of type P is called the \textit{type class of the n-type} P and is denoted by $\mathcal{T}^n_p$. 
\begin{lemma}
The number of possible n-types is
\begin{equation}
\binom{n + |A| - 1}{|A| - 1}
\end{equation}
\end{lemma}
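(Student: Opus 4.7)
The plan is to set up a bijection between $n$-types on $A$ and weak compositions of $n$ into $|A|$ parts, and then invoke the standard stars-and-bars count.

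First I would observe that by definition an $n$-type $\hat{P}$ on $A$ is completely determined by the vector of counts $\bigl(N_a\bigr)_{a\in A}$, where $N_a = |\{i : x_i = a\}|$, since $\hat{P}(a) = N_a/n$. Conversely, any vector of non-negative integers $(N_a)_{a \in A}$ satisfying $\sum_{a\in A} N_a = n$ is realized by some sequence in $A^n$ (e.g.\ by listing $N_{a_1}$ copies of $a_1$, then $N_{a_2}$ copies of $a_2$, and so on). This establishes a bijection between the set of $n$-types and the set of tuples $(N_1, \ldots, N_{|A|}) \in \mathbb{Z}_{\geq 0}^{|A|}$ with $N_1 + \cdots + N_{|A|} = n$.

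Next I would count these tuples by the standard stars-and-bars argument: represent each such tuple as a row of $n$ indistinguishable stars partitioned by $|A|-1$ bars, where the number of stars before the first bar is $N_1$, between the $j$-th and $(j+1)$-st bar is $N_{j+1}$, and after the last bar is $N_{|A|}$. Choosing the positions of the $|A|-1$ bars among the $n + |A| - 1$ total symbols yields exactly $\binom{n + |A| - 1}{|A| - 1}$ compositions.

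There is no real obstacle here; the only subtlety is making sure each $n$-type corresponds to a unique count vector (which follows because fixing $n$ and the rational numbers $\hat{P}(a)$ recovers the integers $N_a = n\hat{P}(a)$) and that every non-negative integer solution to $\sum_a N_a = n$ is actually attained by some sequence, which is immediate from the explicit construction above. Combining the bijection with the combinatorial count completes the proof.
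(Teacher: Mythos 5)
Your proof is correct and complete: the identification of an $n$-type with its integer count vector $(N_a)_{a\in A}$, the observation that every non-negative integer solution of $\sum_a N_a = n$ is realized by an explicit sequence, and the stars-and-bars count of such solutions together give exactly the stated binomial coefficient. It is worth noting that you have supplied substantially more than the paper does: the paper's proof consists only of restating the definition of the type class $\mathcal{T}(P) = \{x \in \mathcal{X}^n : P_x = P\}$ and asserting that its ``combinatoric cardinality provides the result,'' which conflates two different objects --- the cardinality of a single type class $\mathcal{T}(P)$ (the subject of the next lemma, of size roughly $2^{nH(P)}$) is not the number of distinct $n$-types. Your argument counts the correct object, namely the set of admissible count vectors, and makes explicit the two points the paper leaves unaddressed: that distinct types correspond to distinct count vectors, and that every count vector is attained. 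In short, your route is the standard and correct one, and it repairs rather than merely reproduces the paper's argument.
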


\begin{proof}
\begin{equation}
\mathcal{T}(P) = \{x \in \mathcal{X}^n : P_x = P\}
\end{equation}
The combinatoric cardinality of $\mathcal{T}(P)$ provides the result
\end{proof}
\begin{lemma}
For any n-type P, 
\begin{equation}
\binom{n + |A| -1}{|A| - 1}^{-1} 2^{nH \left(P\right)} \leq |\mathcal{T}^n_p| \leq 2^{nH \left(P\right)}
\end{equation}
\end{lemma}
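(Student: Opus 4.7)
The approach is to exploit the key identity that every sequence in a given type class has the \emph{same} probability under the i.i.d.\ product distribution induced by its type. The upper bound then falls out immediately, and the lower bound follows by combining this identity with the fact (from the previous lemma) that the number of $n$-types grows only polynomially in $n$.

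First I would observe that for any $x^n \in \mathcal{T}^n_P$,
$$P^n(x^n) \;=\; \prod_{a \in A} P(a)^{n\hat{P}(a)} \;=\; \prod_{a \in A} P(a)^{nP(a)} \;=\; 2^{-nH(P)},$$
since $\log_2 P^n(x^n) = n\sum_a P(a)\log_2 P(a) = -nH(P)$. This yields the basic identity $P^n(\mathcal{T}^n_P) = |\mathcal{T}^n_P|\cdot 2^{-nH(P)}$, from which the upper bound $|\mathcal{T}^n_P|\leq 2^{nH(P)}$ is immediate because $P^n(\mathcal{T}^n_P)\leq 1$.

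For the lower bound the central step --- and the main technical obstacle --- is to show that $\mathcal{T}^n_P$ is the \emph{most probable} type class under $P^n$: namely, for every $n$-type $Q$,
$$P^n(\mathcal{T}^n_P) \;\geq\; P^n(\mathcal{T}^n_Q).$$
I would establish this by expanding the ratio through the multinomial formulas $|\mathcal{T}^n_P| = n!/\prod_a (nP(a))!$ and the analogue for $Q$, obtaining
$$\frac{P^n(\mathcal{T}^n_P)}{P^n(\mathcal{T}^n_Q)} \;=\; \prod_{a\in A} \frac{(nQ(a))!}{(nP(a))!}\, P(a)^{n(P(a)-Q(a))},$$
and then applying the elementary inequality $m!/k! \geq k^{m-k}$, valid for all nonnegative integers $m,k$ (with $0^0=1$). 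After substitution, the factors $n^{n(Q(a)-P(a))}$ collect to $n^{n\sum_a(Q(a)-P(a))} = n^0 = 1$, the powers of $P(a)$ cancel exactly, and the product is bounded below by $1$.

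With this in hand the rest is routine. Because the type classes partition $A^n$ and there are at most $\binom{n+|A|-1}{|A|-1}$ of them by the preceding lemma, the most probable class --- namely $\mathcal{T}^n_P$ --- must carry probability at least $\binom{n+|A|-1}{|A|-1}^{-1}$ under $P^n$. Combining this with $P^n(\mathcal{T}^n_P) = |\mathcal{T}^n_P|\cdot 2^{-nH(P)}$ gives the required lower bound. The delicate point is the factorial inequality: one must verify that it holds symmetrically whether $m\geq k$ or $m<k$ (the latter case reducing to $(m+1)(m+2)\cdots k\leq k^{k-m}$), so that no case split on $Q(a)>P(a)$ versus $Q(a)<P(a)$ has to be propagated through the telescoping.
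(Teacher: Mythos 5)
Your proposal is correct and follows essentially the same route as the paper's proof: the upper bound from $P^n(\mathcal{T}^n_P)=|\mathcal{T}^n_P|2^{-nH(P)}\leq 1$, and the lower bound by showing $\mathcal{T}^n_P$ is the most probable type class via the ratio expansion and the inequality $m!/k!\geq k^{m-k}$, then dividing the total probability $1$ by the polynomial number of types. Your handling of the factorial inequality and the cancellation of the $P(a)$ powers is in fact cleaner than the paper's somewhat garbled rendering of that step.
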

\begin{proof}
First, we prove the upper bound using $P(\mathcal{T}(P) \leq 1$.
\begin{equation}
1 \geq P^n (\mathcal{T}(P)) = \sum_{x\ in T(P)} P^n (x) = \sum_{x\ in T(P)} 2^{-nH(P)} = \|T(P)\|2^{-nH(P)}
\end{equation}
Consequently, $\|T(P)\| \leq 2^{nH(P)}$. For the lower bound, using the fact that $T(P)$ has the highest probability amongst all type classes in P, we can bound the ratio of probabilities
\begin{equation}
\frac{P^n(T(P))}{P^n(T(\hat{P}))}\\
= \frac{\|T(P)\| \prod_{a\in \mathcal{X} P(a)^{nP(a)}}}{\|T(\hat{P})\| \prod_{a\in \mathcal{X} P(a)^{n\hat{P}(a)}}}
=\prod_{a \in \mathcal{X}} \frac{(n \hat{P}(a))!}{(n{P}(a))!} P(a)^{n(P(a) -\hat{P}(a))}\\
\end{equation}
Using the identity $\frac{m!}{n!} \geq n^{m-n}$, we see 
\begin{equation}
\frac{P^n (\mathcal{T}(P))}{P^n (\mathcal{T}(\hat{P}))} \geq \prod_{a \in \mathcal{X}} n^{n(P(a) - \hat{P}(a))} = n^{n(1-1)}=1
\end{equation}
So $P^n(T(P)) \geq P^n (T(\hat{P}))$. The lower bound can now be found as
\begin{equation}
\begin{aligned}
1 = \sum_{Q \in \mathcal{P}_n} P^n (T(Q)) \leq \sum_{Q \in \mathcal{P}_n} \\
= max_Q P^n (T(Q))\\
 = \sum_{Q \in \mathcal{P}_n} P^n (T(P))\\
\leq (n+1)^{\| \mathcal{X} \|} P^n (T(P)) = (n+1)^{\| \mathcal{X} \|} \sum_{x \in T(P)} P^n (x)\\
 = (n+1)^{\| \mathcal{X} \|} \sum_{x \in T(P)} 2^{-nH(P)} \\
=(n+1)^{\| \mathcal{X} \|} \|T(P) \| 2^{-nH(P)}
\end{aligned}
\end{equation}

\end{proof}
To connect the theory of types with general probability theory, we must develop a sense of relative entropy. For any distribution P on A, let $P^n$ denote the distribution of n independent drawings from P, that is, $P^n \left(x^n_1 \right) = \prod_{i=1}^n P \left(x_i \right), x^n_1 \in A^n$.
\begin{lemma}
For any distribution P on A and any \textit{n-type} Q
\begin{equation}
\begin{split}
\frac{P^n \left(x^n_1 \right)}{Q^n \left(x^n_1 \right)} = 2^{-nD \left(Q \| P \right)}, if x^n_1 \in \mathcal{T}^n_Q\\
\binom{n+ |A| -1}{|A|-1}^{-1} 2^{-nD \left(Q \| P \right)} \leq P \left(\mathcal{T}^n_p \right) \leq 2^{-nD \left(Q \| P \right)}
\end{split}
\end{equation}
\begin{proof}
For probability $P \in P_n$, distribution $Q$, the probability of type class $T(P)$ under $Q^n$ is $2^{-nD(P \| Q)}$. We see 
\begin{equation}
\begin{aligned}
Q^n(T(P)) = \sum_{x \in T(P)} Q^n (x) \\ 
= \sum_{x \in T(P)} 2^{-n(D(P \|Q) + H(P))} \\
= \| T(P) \| 2^{-n(D(P \| Q ) + H(P))}
\end{aligned}
\end{equation}
Replacing $\| T(P) \| $ with the result from Lemma 3, we see the result.
\end{proof}
\end{lemma}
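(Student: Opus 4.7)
The plan is to prove the two bullets in sequence, since the second follows from the first combined with Lemma 3. The key observation for the pointwise identity is that whenever $x_1^n$ lies in $\mathcal{T}_Q^n$, the sequence contains exactly $nQ(a)$ copies of each symbol $a \in A$, so the product defining the i.i.d. probability factors cleanly according to the type.

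First I would compute, for $x_1^n \in \mathcal{T}_Q^n$,
\begin{equation}
P^n(x_1^n) = \prod_{i=1}^n P(x_i) = \prod_{a \in A} P(a)^{nQ(a)},
\end{equation}
and take base-$2$ logarithms to get $\log P^n(x_1^n) = n\sum_a Q(a)\log P(a) = -n\bigl(H(Q)+D(Q\|P)\bigr)$. The same calculation with $P$ replaced by $Q$ gives $\log Q^n(x_1^n) = -nH(Q)$. Dividing yields $P^n(x_1^n)/Q^n(x_1^n) = 2^{-nD(Q\|P)}$, which is the first assertion. This step is essentially bookkeeping; the only thing to be careful about is that the identity $\sum_a Q(a)\log P(a) = -H(Q) - D(Q\|P)$ is used in the correct direction.

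Next I would derive the bounds on $P^n(\mathcal{T}_Q^n)$ by noting that the first part shows every $x_1^n \in \mathcal{T}_Q^n$ has the same $P^n$-mass, namely $2^{-n(H(Q)+D(Q\|P))}$. Hence
\begin{equation}
P^n(\mathcal{T}_Q^n) = |\mathcal{T}_Q^n| \cdot 2^{-n(H(Q)+D(Q\|P))}.
\end{equation}
Substituting the upper bound $|\mathcal{T}_Q^n| \le 2^{nH(Q)}$ from Lemma 3 gives $P^n(\mathcal{T}_Q^n) \le 2^{-nD(Q\|P)}$, and substituting the matching lower bound $|\mathcal{T}_Q^n| \ge \binom{n+|A|-1}{|A|-1}^{-1} 2^{nH(Q)}$ gives the claimed lower bound with the same polynomial prefactor.

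I do not expect a serious obstacle here; the argument is an exercise in matching exponents. The only mildly delicate point is consistency of notation between the statement (where the distribution being averaged against is $P$ and the type is $Q$) and the earlier lemmas (which state the size bounds for the type class of an arbitrary n-type). As long as Lemma 3 is applied to the n-type $Q$ rather than to $P$, both inequalities drop out immediately, and the proof can be written in a few lines.
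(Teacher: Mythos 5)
Your proposal is correct and follows essentially the same route as the paper: express the $P^n$-probability of each sequence in $\mathcal{T}^n_Q$ as $2^{-n(H(Q)+D(Q\|P))}$, multiply by $|\mathcal{T}^n_Q|$, and invoke the cardinality bounds of the preceding lemma. You are in fact somewhat more complete than the paper, since you derive the pointwise ratio identity explicitly and keep the roles of $P$ and $Q$ consistent with the statement rather than swapping them as the paper's proof does.
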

\begin{cor}
Let $\hat{P}_n$ denote the empirical distribution (type) of a random sample of size n drawn from P. Then
\begin{equation}
P \left(D \left( \hat{P}_n \| P \right) \geq \delta \right) \leq \binom{n+ |A| -1}{|A|-1} 2^{-n \delta}, \forall \delta >0
\end{equation}
\end{cor}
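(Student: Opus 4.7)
The plan is to bound the target probability by decomposing the event $\{D(\hat{P}_n \| P) \geq \delta\}$ into a union over $n$-type classes and then to apply the two bounds already established in this section: the count of $n$-types from Lemma~2 and the probability bound on a single type class from Lemma~4.

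First, I would observe that $\hat{P}_n$ is always one of the (finitely many) $n$-types on $A$, and that each realized sample $x_1^n$ belongs to exactly one type class $\mathcal{T}^n_Q$. Since $D(\hat{P}_n \| P)$ depends on $x_1^n$ only through its type, the event of interest decomposes as a disjoint union over $n$-types,
$$\{D(\hat{P}_n \| P) \geq \delta\} \;=\; \bigsqcup_{Q \in \mathcal{P}_n,\ D(Q\|P)\geq \delta} \mathcal{T}^n_Q,$$
so that $P\bigl(D(\hat{P}_n \| P)\geq\delta\bigr) = \sum_{Q} P^n(\mathcal{T}^n_Q)$, where the sum ranges over those $n$-types $Q$ with $D(Q\|P)\geq \delta$.

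Next, I would plug in the upper bound from Lemma~4, namely $P^n(\mathcal{T}^n_Q)\leq 2^{-nD(Q\|P)}$, for each summand. By the restriction on the index set, each such term is bounded by $2^{-n\delta}$ uniformly in $Q$. Finally, I would invoke Lemma~2 to control the number of summands by the total number of $n$-types, $\binom{n+|A|-1}{|A|-1}$. Multiplying these two bounds gives precisely the stated inequality.

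I do not anticipate a real obstacle; the corollary is essentially a union bound wrapping Lemmas~2 and~4. The only point worth a moment of care is the reduction to full type classes: it is valid because $D(\hat{P}_n\|P)$ is a function of the type alone, so the event either contains $\mathcal{T}^n_Q$ entirely or is disjoint from it. Everything else is bookkeeping.
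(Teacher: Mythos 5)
Your proposal is correct and follows essentially the same route as the paper: both decompose the event into a union of type classes $\mathcal{T}^n_Q$ with $D(Q\|P)\geq\delta$, bound each class probability by $2^{-nD(Q\|P)}\leq 2^{-n\delta}$ using the preceding lemma, and then bound the number of summands by the total count of $n$-types. Your explicit remark that the reduction to whole type classes is legitimate because $D(\hat{P}_n\|P)$ depends only on the type is a point the paper leaves implicit.
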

\begin{proof}
Given an $\epsilon > 0$, we can define a typical set $\mathcal{T}^{\epsilon}_Q$ of sequences for the distribution Q as $\mathcal{T}^{\epsilon}_Q = \{x^n: D(P_{x^n}\|Q) \leq \epsilon\}$. Then the probability of an atypical sequence is
\begin{equation}
\begin{split}
1-Q^n(\mathcal{T}^{\epsilon}_Q) = \sum_{P: D(P \| Q) \geq \epsilon} Q^n (T(P)) \\
\leq \sum_{P: D(P \| Q) \geq \epsilon} 2^{-nD (P \| Q)} \\
\leq \sum_{P: D(P \| Q) \geq \epsilon} 2^{-n \delta} \\
\leq \binom{n+ |A| -1}{|A|-1} 2^{-n \delta}, \forall \delta \geq 0
\end{split}
\end{equation}
\end{proof}

\begin{theorem}{Sanov's Theorem}
Let $\Pi$ be a set of distributions on A whose closure is equal to the closure of its interior. Then for the empirical distribution of a sample from a strictly positive distribution P on A, 
\begin{equation}
-\frac{1}{n} \text{\text{log}} P \left(\hat{P} \in \Pi \right) \rightarrow D \left(\Pi \| P \right)
\end{equation}
\end{theorem}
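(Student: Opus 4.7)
The plan is to apply the method of types developed in the preceding lemmas by decomposing the event $\{\hat{P}\in\Pi\}$ according to which $n$-type class the sample lands in. I will denote $D^* := \inf_{Q\in\Pi} D(Q\|P)$, interpret $D(\Pi\|P)$ in the statement as $D^*$, and prove matching asymptotic upper and lower bounds on $-\tfrac{1}{n}\log P(\hat{P}\in\Pi)$, both converging to $D^*$.

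For the upper bound, I would write
$$P(\hat{P}\in\Pi) = \sum_{Q\in\Pi\cap \mathcal{P}_n} P^n(\mathcal{T}^n_Q) \leq \sum_{Q\in\Pi\cap \mathcal{P}_n} 2^{-nD(Q\|P)} \leq \binom{n+|A|-1}{|A|-1}\, 2^{-nD^*},$$
where the first inequality invokes Lemma 4 applied to each type class and the second uses Lemma 2 to bound the number of $n$-types, together with $D(Q\|P)\geq D^*$ for each $Q\in\Pi$. Taking $-\tfrac{1}{n}\log$ and letting $n\to\infty$ the polynomial prefactor vanishes exponentially, yielding $\liminf_{n\to\infty} -\tfrac{1}{n}\log P(\hat{P}\in\Pi) \geq D^*$.

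For the matching lower bound I need to exhibit a sequence of $n$-types $Q_n\in\Pi$ with $D(Q_n\|P)\to D^*$. This is exactly where the regularity hypothesis that the closure of $\Pi$ equals the closure of its interior enters: pick $Q^\star \in \overline{\Pi}$ with $D(Q^\star\|P)$ arbitrarily close to $D^*$, then push $Q^\star$ slightly into the interior of $\Pi$, and finally round to an $n$-type $Q_n\in\Pi$ (which is possible for $n$ large since $n$-types are $\tfrac{1}{n}$-dense in the simplex $\mathcal{P}(A)$ and the interior of $\Pi$ is open). Then Lemma 4 gives
$$P(\hat{P}\in\Pi) \geq P^n(\mathcal{T}^n_{Q_n}) \geq \binom{n+|A|-1}{|A|-1}^{-1} 2^{-nD(Q_n\|P)},$$
and strict positivity of $P$ guarantees $D(\cdot\|P)$ is continuous at $Q^\star$, so $D(Q_n\|P)\to D^*$ and hence $\limsup_{n\to\infty} -\tfrac{1}{n}\log P(\hat{P}\in\Pi) \leq D^*$.

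The main obstacle is the lower bound's approximation step. Without the regularity assumption one can construct pathological $\Pi$ (for instance, a single boundary point of the simplex) where the infimum of $D(\cdot\|P)$ over $\Pi$ is finite yet no $n$-type ever lies in $\Pi$, making $P(\hat{P}\in\Pi)=0$; the hypothesis $\overline{\Pi}=\overline{\mathrm{int}(\Pi)}$ is precisely what rules this out and lets us do the ``$Q^\star\to$ interior point $\to$ nearby $n$-type'' approximation while keeping divergence from $P$ under control. The strict positivity of $P$ is the secondary technical point that ensures divergences stay finite and depend continuously on their first argument throughout the approximation.
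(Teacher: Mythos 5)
Your proposal is correct and follows essentially the same route as the paper: decompose $\{\hat{P}\in\Pi\}$ over the type classes in $\Pi\cap\mathcal{P}_n$, sandwich the probability between the polynomial-times-exponential bounds from the earlier lemmas, and use continuity of $D(\cdot\|P)$ together with the regularity hypothesis on $\Pi$ to identify the exponent. In fact your treatment of the lower bound is more careful than the paper's one-line appeal to continuity, since you explicitly justify why $n$-types in $\Pi$ with divergence approaching the infimum must exist.
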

\begin{proof}{Sanov's Theorem}
Let $\mathcal{P}_n$ be the set of possible n-types and let $\Pi_n = \Pi \cap \mathcal{P}_n$. The previous lemma implies that
\begin{equation}
\begin{split}
Prob \left (\hat{P}_n \in \Pi_n \right) = P^n \left( \cup_{Q\in \Pi_n} \mathcal{T}^n_Q \right) \ \text{is upper bounded by}\\
\binom{n+|A|-1}{|A|-1}2^{-nD \left(\Pi_n \| P \right)} \ \text{and lower bounded by}\\
\binom{n+|A|-1}{|A|-1}^{-1}2^{-nD \left(\Pi_n \| P \right)} 
\end{split}
\end{equation}
Since $D \left(Q \| P \right)$ is continuous in Q, the hypothesis on $\Pi$ implies that $D \left( \Pi_n \| P \right)$ is arbitrarily close to $D \left(\Pi \| P \right)$ if n is large. 
\end{proof}

\subsection*{Proofs}
\begin{proof}[Stein's Lemma]
To prove the theorem, we construct a sequence of acceptance regions $A_n \subseteq X^n$ such that $A_n < \epsilon$ and $\beta_n = 2^{-nD(P_1 \| P_2)}$. We then show that no other sequence of tests has an asymptotically better exponent.\\
First, we define
\begin{equation}
A_n = \left\{x \in X^n: 2^{+n(D(P_1 \| P_2)^{-\delta})} \leq \frac{P_1x}{P_2x} \leq 2^{+n(D(P_1 \| P_2)^{+\delta}}\right\}
\end{equation}
Then, we have the following properties:
\begin{enumerate}
\item $P^n_1(A_n) \rightarrow 1$. This follows from:
\begin{equation}
P^n_1(A_n) = P^n_1 \left(\frac{1}{n} \sum_{i=1}^n \text{log} \frac{P_1(X_i)}{P_2(X_i)} \in \left(D(P_1 \| P_2 \right) - \delta_1D \left(P_1 \| P_2 \right) + \delta \right)
\end{equation} by the strong LLN, since $D\left(P_1 \| P_2 \right) = E_{P_1} \left(\text{log} \frac{P_1(X)}{P_2(X)} \right)$. Hence, for sufficiently large n, $A_n < \epsilon$.
\item $P^n_2(A_n) \leq ^{+n(D(P_1 \| P_2)^{-\delta})}$. Using the definition of $A_n$, we have
\begin{equation}
\begin{split}
P^n_2(A_n) = \sum_{A_n} P_2(x) \leq \sum_{A_n} P_1(x) 2^{+n(D(P_1 \| P_2)^{-\delta})}\\
=2^{+n(D(P_1 \| P_2)^{-\delta})} \sum_{A_n} P_1(x)\\
=2^{+n(D(P_1 \| P_2)^{-\delta})}(1-\alpha_n).
\end{split}
\end{equation}
Similarly, $P^n_2(A_n) \geq 2^{+n(D(P_1 \| P_2)^{+\delta})}(1-\alpha_n)$. And so, $\frac{1}{n} \text{log} \beta_n \leq -D \left(P_1 \| P_2 \right) + \delta + \frac{\text{log}(1-\alpha_n)}{n}$, and $\frac{1}{n} \text{log} \beta_n \geq -D \left(P_1 \| P_2 \right) - \delta + \frac{\text{log}(1-\alpha_n)}{n}$, hence $ \ \text{lim}_{n \rightarrow \infty} \frac{1}{n} \text{\text{log}} \beta_n = -D \left(P_1 \| P_2 \right).$
\end{enumerate}
\end{proof}
\begin{proof}[Chernoff Bound]
The optimum hypothesis test is a likelihood ratio test, which follows the form: 
\begin{equation}
D\left( P_{X^n} \| P_2 \right) -D \left(P_{X^n} \| P_1 \right) >T
\end{equation}
The test divides the probability simplex into regions corresponding to hypothesis 1 and hypothesis 2, respectively. This is illustrated below:
\begin{figure}[ht]
\centering
\includegraphics[scale=0.5]{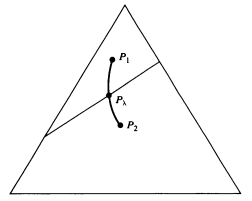}
\caption{The figure above shows the probability simplex and the Chernoff bound. Notice that for error probabilities $P_1$ and $P_2$, there exists an optimal error probability $P_{\lambda}$. This is deter\text{min}ed as a weighted arg\text{min}.}
\end{figure}
Let A be the set of types associated with hypothesis 1. From the preceding discussions, it follows that the closest point in the set $A^c$ to $P_1$ is on the boundary of A, and is of the form given by $[8]$. Then, it is clear that $P_{\lambda}$ is the distribution in A that is closest to $P_2$. It is also the distribution in $A^c$ that is closest to $P_1$. By Sanov's theorem, we can calculate the associated probabilities of error:
\begin{equation}
\begin{split}
\alpha_n = P^n_1(A^c) = 2^{-nD\left(P_{\lambda^*} \| P_1 \right)}\\
\beta_n = P^n_2(A)  = 2^{-nD\left(P_{\lambda^*} \| P_2 \right)}
\end{split}
\end{equation}
In the Bayesian case, the overall probability of error is the weighted sum of the two probabilities of error,
\begin{equation}
P_e = \pi_1 2^{-nD\left(P_{\lambda^*} \| P_1 \right)} + \pi_2 2^{-nD\left(P_{\lambda^*} \| P_2 \right)} = 2^{-n \ \text{min} \{{-D\left(P_{\lambda^*} \| P_1 \right),-D\left(P_{\lambda^*} \| P_2 \right)  \}}}
\end{equation}
since the exponential rate is deter\text{min}ed by the worst exponent. Since $D\left(P_{\lambda} \| P_1 \right)$ increases with $\lambda$ and $D \left(P_{\lambda} \|P_2 \right)$ decreases with $\lambda$, the maximum value of the \text{min}imum of $\{ D \left( P_{\lambda} \| P_1 \right), D\left(P_{\lambda} \| P_2 \right) \}$ is attained when they are equal. We choose $\lambda$ so that 
\begin{equation}
D\left(P_{\lambda} \| P_1 \right) = D \left(P_{\lambda} \| P_2 \right) = C(P_1, P_2)
\end{equation}
Thus $C\left(P_1, P_2\right)$ is the highest achievable exponent for the probability of error, and is called the Chernoff information.

The closest point in the set $A^c$ to $P_1$ is on the boundary of A, and is of the form given by []. Then from the previous discussion, it is clear that $P_{\lambda}$ is the distribution in A that is closest to $P_2$; it is also the distribution in $A^c$ that is closest to $P_1$. By Sanov's theorem, we can calculate the associated probabilities of error:
\begin{equation}
\begin{split}
\alpha_n = P^n_1 \left(A^c \right) = 2^{-nD \left (P_{\lambda^*} \| P_1 \right)}\\
\beta_n =  P^n_2 \left(A^c \right) = 2^{-nD \left (P_{\lambda^*} \| P_2 \right)}
\end{split}
\end{equation}
In the Bayesian case, the overall probability of error is the weighted sum of the individual two probabilities of error,
\begin{equation}
P_{e} = \pi_1 2^{-nD \left( P_{\lambda} \| P_1 \right)} + \pi_2 2^{-nD \left( P_{\lambda} \| P_2 \right )} = 2^{-n \ \text{min} \left( D(P_{\lambda} \| P_1 \right), D \left(P_{\lambda} \| P_2 \right) }
\end{equation}
since the exponential rate is deter\text{min}ed by the worst exponent. Since $D \left( P_{\lambda} \| P_1 \right) = D \left ( P_{\lambda} \| P_1 \right)$ increases with $\lambda$ and $D \left( P_{\lambda} \| P_2 \right)$ decreases with $\lambda$, the maximum value of the \text{min}imum of $D \left( P_{\lambda} \| P_1 \right), D \left( P_{\lambda} \| P_2 \right)$ is attained when they are equal. 

This is illustrated below:

\begin{figure}[h]
\centering
\includegraphics[scale=0.5]{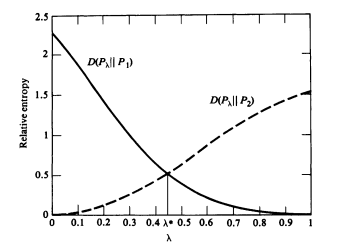}
\caption{The figure above shows the relative entropy for each error probability as a function of $\lambda$.}
\end{figure}

We choose $\lambda$ so that 
\begin{equation}
D\left(P_{\lambda} \| P_1 \right) = D \left(P_{\lambda} \| P_2 \right) = C(P_1, P_2)
\end{equation}
Thus $C\left(P_1, P_2\right)$ is the highest achievable exponent for the probability of error, and is called the Chernoff information.

\end{proof}

\end{document}